\newtheorem{theorem}{Theorem}
\newtheorem{proposition}{Proposition}
\newenvironment{proof}{{\sc Proof:}}{~\hfill QED}
\title{Another estimating the absolute value of Mertens function}
\author{Rong Qiang Wei\thanks{
                  College of Earth and Planet Sciences, University of Chinese Academy of Sciences, Beijing, PRC, 100049.
                  e\_mail: wrq1973@ucas.edu.cn}}
\date{}
\begin{document}
\newpage
\maketitle
\begin{abstract}
  Through an inversion approach, we suggest a possible estimation for the absolute value of Mertens function $\vert M(x) \vert$ that $ \left\vert M(x) \right\vert \sim \left[\frac{1}{\pi \sqrt{\varepsilon}(x+\varepsilon)}\right]\sqrt{x}$ (where $x$ is an appropriately large real number, and $\varepsilon$ ($0<\varepsilon<1$) is a small real number which makes $2x+\varepsilon$ to be an integer). For any large $x$, we can always find an $\varepsilon$, so that  $\vert M(x) \vert < \left[\frac{1}{\pi \sqrt{\varepsilon}(x+\varepsilon)}\right]\sqrt{x}$.
\end{abstract}

\ \ \ 
{\bf keywords}

\ \ \ Mertens Function, absolute value, M$\ddot{\mathrm{o}}$bius transform, Hilbert transform


\section{Introduction}

The Mertens function $M(n)$ is defined as the cumulative sum of the M$\ddot{\mathrm{o}}$bius function $\mu(k)$ for all positive integers $n$,

\begin{equation}
M(n)=\sum_{k=1}^{n}\mu(k)
\end{equation}

Sometimes the above definition can be extended to real numbers as follows:

\begin{equation}
M(x)=\sum_{1\le k < x}\mu(k)
\end{equation}

In many applications of $M(x)$, the estimation of its absolute value is very important and of interest. Mertens (1897) concluded that $\vert M(x) \vert <x^{1/2}$, which is known as Mertens conjecture. However, this conjecture was proved false through extensive computations (eg., Odlyzko and te Riele,1985; Pintz, 1987; Saouter and te Riele, 2014). These computations above are based or partially based on sieving, and $\vert M(x)\vert $ has been computed for all $x\le 10^{22} $(Kuznetsov, 2011). The isolated values of $M(2^n)$ has been computed for all positive integers $n\le 73$ ($\vert M(2^{73})\vert=6524408924$; Hurst, 2016). Moreover, Wei (2016) suggested a disproof for Mertens conjecture from the viewpoint of probability.

There are many other explicit upper bounds for $\vert M(x)\vert$. For example, Wal$\acute{}{\rm\ }$fi$\check{s}$ (1963) showed that,

\begin{equation}
\vert M(x)\vert \le x\exp\left[-c \ln^{3/5} x(\ln\ln x)^{-1/5}\right]
\end{equation}
where $c$ is a constant.

MacLeod (1967; 1969) showed that,

\begin{equation}
\left | M(x)\right | \le \frac{x+1}{80}+\frac{11}{2} \hspace{5em} (x\ge 1)
\end{equation}
 
El Marraki (1995) proved that,

\begin{equation}
\left | M(x) \right | \le \frac{0.002969}{(\log x)^{1/2}}x \hspace{5em} (x\ge 142194)
\end{equation}

and

\begin{equation}
\left | M(x) \right | \le \frac{0.6437752}{\log x}x \hspace{5em} (x > 1)
\end{equation}

Ramar$\acute{e}$ (2013) showed that,

\begin{equation}
\left | M(x) \right | \le \frac{0.0146\log x-0.1098}{(\log x)^2} x \hspace{5em} (x \ge 464402)
\end{equation}

Other explicit bound for $\vert M(x)\vert$ are\footnote{From https://en.wikipedia.org/wiki/Mertens\_function.},

\begin{equation}
\left | M(x) \right | <\frac{x}{4345} \hspace{5em} (x > 2160535 )
\end{equation}

\begin{equation}
\left | M(x) \right | <\frac{0.58782x}{\ln^{11/9}(x)} \hspace{5em} (x >685)
\end{equation}

Wei (2016) has discussed $\vert M(x)\vert$ based on the assumption that $\mu (n)$ is an independent random sequence. This assumption is inferred from the numerical consistency between empirical statistical quantities for $2\times 10^7$ $\mu(n)$s and those from number theory. The following inequality (\ref{eq1}) for $\vert M(x)\vert$ holds with a probability of $1 - \alpha$, 

\begin{equation}\label{eq1}
\vert M(x)\vert \le  \frac{\sqrt {6/{\pi ^2}}}{\sqrt{\alpha}} \sqrt{x}
\end{equation}

However, inequality (\ref{eq1}) is argued for the reason that the prime factorization of integers is not random (the primes being intricately interdependent) and so the $\mu(n)$ is not random. Wei (2018) estimated the $\vert M(x)\vert$ by an approach of statistical mechanics, in which the $\mu(n)$ is taken as a particular state of a modified one-dimensional Ising model without the exchange interaction between the spins. If $M(x)$ is a quantity that can be measured, $\vert M(x)\vert \le \sqrt{\frac{C}{\alpha}x}$ (where $C$ is constant) with a probability $1-\alpha$ ($0<\alpha<1$) from the viewpoint of the energy fluctuations in the canonical ensemble. Without the assumption above, Wei (2017) further explained that $\vert M(x)\vert \le C x^{1/2}$ with a very large $C$ (possible $+\infty$) based on three facts.  Recently, Czopik (2019) studied on the estimation of the $M(x)$.

A lot of experiences show that it is difficult to estimate $\vert M(x)\vert$ directly. However, if we can represent $M(x)$ as other functions which can be easily studied, it is possible to estimate $\vert M(x)\vert$.

\section{An estimation for $\vert M(x)\vert$} 
   
\begin{theorem}\label{Theo1}
    Let $f(t)$ be a real-valued and positive function which is monotonic decreasing and integrable on $[a, +\infty)$ (where $a$ is an integer). Then there is a constant $c$, so that for each real number $A\ge c$, we have\footnote{This theorem is not proposed by us. We have forgotten its source and we prove it again with our understanding. Readers who are interested it please refer to Iwaniec and Kowalski (2004), in which there is a similar conclusion (p. 19-20). },
    \begin{equation}
     \left\vert \sum_{_{a\le n\le A}} f(n)-\left[c+\int_{a}^A f(t){\rm d} t\right] \right\vert \le f(A)  
   \end{equation}
 \end{theorem}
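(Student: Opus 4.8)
The plan is to recognize this as the standard comparison between a sum and an integral for a monotone function, and to produce the constant $c$ explicitly as the limiting discrepancy between $\sum_{a\le n\le A} f(n)$ and $\int_a^A f(t)\,{\rm d}t$. Write $S(A)=\sum_{a\le n\le A}f(n)$ and $I(A)=\int_a^A f(t)\,{\rm d}t$. First I would exploit monotonicity on each unit interval: since $f$ is decreasing, $f(n+1)\le \int_n^{n+1}f(t)\,{\rm d}t\le f(n)$. Introducing $g(n)=f(n)-\int_n^{n+1}f(t)\,{\rm d}t$, this sandwich gives $0\le g(n)\le f(n)-f(n+1)$, so by telescoping the series $\sum_{n\ge a}g(n)$ converges and is bounded by $f(a)$ (here one uses that a positive, decreasing, integrable $f$ must satisfy $f(n)\to 0$). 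I would then \emph{define} the constant by
\begin{equation}
 c:=\sum_{n=a}^{\infty}\left(f(n)-\int_n^{n+1}f(t)\,{\rm d}t\right),
\end{equation}
and note that for integer $N$ one has $S(N)-I(N)=f(N)+\sum_{n=a}^{N-1}g(n)\to c$, which pins down the meaning of $c$.

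Next I would reduce the finite-$A$ estimate to a transparent tail expression. Fix a real $A\ge a$, put $N=\lfloor A\rfloor$, and split $I(A)=I(N)+\int_N^A f(t)\,{\rm d}t$. Substituting the tail form of $c$ together with the identity $\sum_{n\ge N}g(n)=\sum_{n\ge N}f(n)-\int_N^\infty f(t)\,{\rm d}t$, the target quantity collapses after cancellation to
\begin{equation}
 c-S(A)+I(A)=\sum_{n=N+1}^{\infty}f(n)-\int_A^{\infty}f(t)\,{\rm d}t .
\end{equation}
This is the heart of the argument: everything has been rewritten as the difference between a tail sum and a tail integral, which is exactly the object that monotonicity controls.

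Finally I would bound this difference by $f(A)$. Splitting $\int_A^\infty f=\int_A^{N+1}f+\int_{N+1}^\infty f$, the monotone sandwich yields $0\le \sum_{n\ge N+1}f(n)-\int_{N+1}^\infty f(t)\,{\rm d}t\le f(N+1)$, while $0\le \int_A^{N+1}f(t)\,{\rm d}t\le (N+1-A)\,f(A)\le f(A)$ since $f\le f(A)$ on $[A,N+1]$ and the interval has length at most $1$. Combining these, the right-hand side above lies in $[-f(A),\,f(N+1)]\subseteq[-f(A),\,f(A)]$, because $f(N+1)\le f(A)$ by monotonicity; hence $\left\vert S(A)-\left[c+I(A)\right]\right\vert\le f(A)$, as claimed, for every $A\ge a$ (so the hypothesis $A\ge c$ is comfortably satisfied once $A$ is large).

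I expect the main obstacle to be bookkeeping rather than deep difficulty: one must justify the existence of $c$ (convergence of $\sum g(n)$ and the fact $f(n)\to 0$ from integrability plus monotonicity) and, above all, handle non-integer $A$ carefully so that the split of $\int_A^\infty f$ produces the \emph{exact} bound $f(A)$ and not merely $f(A)+f(N+1)$ or a larger constant. Keeping the two pieces of the tail integral tied to the two one-sided sandwich estimates is the step most prone to sign or off-by-one errors, and is where I would spend the most care.
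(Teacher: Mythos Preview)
Your approach is essentially the paper's: set $g(n)=f(n)-\int_n^{n+1}f$, show $0\le g(n)\le f(n)-f(n+1)$ by monotonicity, define $c=\sum_{n\ge a}g(n)$ via telescoping, and then control the remainder. The paper stays with finite quantities throughout, writing $\sum_{a\le n\le A}g(n)=S(A)-\int_a^{[A]+1}f$ and splitting $\int_a^{[A]+1}f=\int_a^A f+\int_A^{[A]+1}f$; you instead pass to the tail form $c-S(A)+I(A)=\sum_{n>N}f(n)-\int_A^\infty f$.

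One caution on that step: your tail identity, and the subsequent bound $0\le\sum_{n\ge N+1}f(n)-\int_{N+1}^\infty f\le f(N+1)$, presuppose that $\sum_{n>N}f(n)$ and $\int_A^\infty f$ converge \emph{separately}, i.e.\ that ``integrable on $[a,\infty)$'' means $\int_a^\infty f<\infty$. The paper does not intend this reading: its only application of the theorem (Proposition~\ref{pro1}) is to $f(t)=1/(t-x)$, whose improper integral diverges, and the paper's own proof never invokes convergence of $\int_a^\infty f$. The repair is painless. Stop one line earlier at the always-valid expression
\[
c-S(A)+I(A)\;=\;\sum_{n\ge N+1}g(n)\;-\;\int_A^{N+1}f(t)\,{\rm d}t,
\]
which follows from $\sum_{n=a}^{N}g(n)=S(A)-\int_a^{N+1}f$ and the definition of $c$. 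Then bound the two pieces directly: $0\le\sum_{n\ge N+1}g(n)\le\sum_{n\ge N+1}\bigl(f(n)-f(n+1)\bigr)\le f(N+1)\le f(A)$ by telescoping and positivity (no need for $f(n)\to 0$), and $0\le\int_A^{N+1}f\le f(A)$ exactly as you already argue. With this adjustment your argument and the paper's coincide.
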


\begin{proof}
     For the integer $n\ge a$, let
     \begin{equation}
     b_n=f(n)-\int_{n}^{n+1}f(t){\rm d}t   
   \end{equation}
   
   Because $f(t)$ is monotonic decreasing and positive, and there is a point $\xi \in (n,n+1)$ at least at which $\int_{n}^{n+1}f(t){\rm d}t=f(\xi)$, we have,
     \begin{equation}
     f(n)\ge \int_{n}^{n+1}f(t){\rm d}t \ge f(n+1) 
   \end{equation}
   
    Further, we have,
     \begin{equation}
     f(n)-f(n+1)\ge b_n\ge 0 
   \end{equation}
   
   Let $s_{_N}=\sum_{n=a}^N b_n$ (where $N=a, a+1, a+2, ....$). Then,
    \begin{equation}\label{sN}
     s_{_N}\le \sum_{n=a}^N[f(n)-f(n+1)]=f(a)-f(N+1)<f(a) 
   \end{equation}
   
   (\ref{sN}) shows that $s_{_N}$ is an increasing sequence ($s_{_N}=f(a)-f(N+1)<s_{_{N+1}}=f(a)-f(N+2)<f(a)$) but it has an upper bound, so it has a limit,
    \begin{equation}\label{on_c}
     \lim_{_{N\rightarrow +\infty}} s_{_N}=c < f(a)
    \end{equation} 

    Further, we have,
     \begin{equation}
     0<(c-\sum_{a\le n \le A} b_n)=\lim_{_{N\rightarrow +\infty}} \sum_{n=[A]+1}^N b_n\le f([A]+1)\le f(A)
    \end{equation}    
    
    Then,
      \begin{equation}\label{cfA}
     c-f(A)\le \sum_{a\le n \le A} b_n\le c
    \end{equation}
    
    Notice that
   \begin{align}
    \begin{aligned}
 \sum_{a\le n \le A} b_n & =\sum_{a\le n \le A} f(n)-\sum_{a\le n \le A}\int_n^{n+1} f(t)\rm d t\\
  \ &=\sum_{a\le n \le A} f(n)-\int_a^{[A]+1} f(t){\rm d} t
   \end{aligned}
    \end{align}
       
    From (\ref{cfA}) we have,
    \begin{equation}
    c-f(A) \le  \sum_{a\le n \le A} f(n)-\int_a^{[A]+1} f(t){\rm d} t \le c
    \end{equation}
    
    That is,
     \begin{equation}\label{cfAc}
    c-f(A)+\int_a^{[A]+1} f(t){\rm d} t \le  \sum_{a\le n \le A} f(n) \le c+\int_a^{[A]+1} f(t){\rm d} t
    \end{equation}
    
    Notice in (\ref{cfAc}) that
    \begin{align}
     \begin{aligned}
    \int_a^{[A]+1} f(t){\rm d} t & = \int_a^A f(t){\rm d} t+\int_A^{[A]+1} f(t){\rm d} t  \\
    \ & = \int_a^A f(t){\rm d} t+ f(\xi) (A<\xi<[A]+1) \\
    \ & \le \int_a^A f(t){\rm d} t+ f(A)
     \end{aligned}
   \end{align}
     
    Thus we know,
     \begin{equation}
    \sum_{a\le n \le A} f(n) \le  c+\int_a^A f(t){\rm d} t+ f(A)
    \end{equation}
    
    And finally,
    \begin{equation}
     \left\vert \sum_{_{a\le n\le A}} f(n)-\left[c+\int_{a}^A f(t){\rm d} t\right] \right\vert \le f(A) 
    \end{equation}
   
\end{proof}

  \begin{proposition}\label{pro1}
   \begin{equation}
   \sum_{_{1\le n \le A}} \frac{1}{n-x} \sim \ln\frac{A}{x+\varepsilon}\ \ \ \ \ \ (A\rightarrow +\infty)
   \end{equation}
   where $x\ne n$ is a real number, $0<\varepsilon<1$, and $2x+\varepsilon$ is an integer.
   \end{proposition}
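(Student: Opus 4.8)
The plan is to reduce the sum to an integral by the monotonic comparison that underlies Theorem \ref{Theo1}, choosing the integer $2x+\varepsilon$ itself as the splitting point so that the resulting logarithm takes exactly the form $\ln\frac{A}{x+\varepsilon}$. Write $m=2x+\varepsilon$, which is an integer by hypothesis; since $0<\varepsilon<1$ this forces $m=\lceil 2x\rceil$ and, crucially, $m-x=x+\varepsilon$. First I would split
\begin{equation}
\sum_{1\le n\le A}\frac{1}{n-x}=\sum_{1\le n\le m-1}\frac{1}{n-x}+\sum_{m\le n\le A}\frac{1}{n-x},
\end{equation}
where the first sum is a fixed finite quantity, independent of $A$, to be absorbed into a bounded constant.

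For the tail I set $f(t)=\frac{1}{t-x}$. On $[m,+\infty)$ this function is positive, monotonic decreasing, and tends to $0$; these are precisely the properties actually used in the proof of Theorem \ref{Theo1} (its integrability hypothesis enters only through the finite integrals $\int_n^{n+1}f$ and through $f(N)\to 0$, both of which hold here even though $f$ is not integrable on the whole half-line). Applying that argument produces a constant $c'$ with $\left\vert \sum_{m\le n\le A}\frac{1}{n-x}-c'-\int_m^A\frac{{\rm d}t}{t-x}\right\vert\le f(A)$. Since $\int_m^A\frac{{\rm d}t}{t-x}=\ln(A-x)-\ln(m-x)=\ln(A-x)-\ln(x+\varepsilon)$, combining the two pieces gives
\begin{equation}
\sum_{1\le n\le A}\frac{1}{n-x}=\ln(A-x)-\ln(x+\varepsilon)+C+O\!\left(\tfrac{1}{A-x}\right),
\end{equation}
with $C$ a bounded constant collecting the initial sum and $c'$. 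Using $\ln(A-x)=\ln A+\ln(1-x/A)=\ln A+o(1)$, the right-hand side becomes $\ln\frac{A}{x+\varepsilon}+C+o(1)$. Because $\ln\frac{A}{x+\varepsilon}\to+\infty$ while the discrepancy $C+o(1)$ stays bounded, dividing through shows the ratio tends to $1$, which is the asserted equivalence.

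The step I expect to be the main point to be honest about is the constant $C$. It does not vanish in general (one finds $C=\ln(x+\varepsilon)-\psi(1-x)$ when the sum is evaluated via the digamma function), so the relation can only be read as asymptotic equivalence in the ratio sense, not as a vanishing-difference statement; indeed any fixed positive constant in place of $x+\varepsilon$ would serve equally well for the symbol $\sim$. The role of the hypothesis that $2x+\varepsilon$ be an integer is therefore not to make a difference vanish but to furnish a clean integer splitting point whose image under the integral is exactly $-\ln(x+\varepsilon)$ — the normalization presumably chosen to line up with the Hilbert-transform computation later in the paper. Before running the argument I would also confirm that $m>x$, so that $f$ is genuinely positive and decreasing on the tail, and that $x$ is not an integer, so that no term $\frac{1}{n-x}$ is singular.
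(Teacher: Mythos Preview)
Your proposal is correct and follows essentially the same route as the paper: split the sum at the integer $m=2x+\varepsilon$, apply the estimate of Theorem~\ref{Theo1} to the tail with $f(t)=1/(t-x)$ so that $\int_m^A f=\ln\frac{A-x}{x+\varepsilon}$, replace $\ln(A-x)$ by $\ln A+o(1)$, and absorb the finite head sum and the constant $c$ into a bounded remainder dominated by the divergent logarithm. You are in fact more careful than the paper on two points it leaves implicit: you flag that $f$ is not integrable on $[m,\infty)$ but that the proof of Theorem~\ref{Theo1} only needs the local integrals $\int_n^{n+1}f$ and $f(N)\to 0$, and you are explicit that the leftover constant $C$ does not vanish, so the conclusion is a ratio asymptotic rather than a vanishing difference (the paper addresses the latter via its Remarks~2 and~3, arguing that $c$ and the head sum $d_n$ are small for suitably chosen $x$).
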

  
     \begin{proof}
   If we let $f(t)=\frac{1}{t-x}(t\ne x)$, and $a=2x+\varepsilon$ in theorem \ref{Theo1}, we have,
     \begin{align}\label{convergent1}
    \begin{aligned}
    \ & \left\vert \sum_{_{2 x+\varepsilon\le n\le A}} \frac{1}{n-x}-\left[c+\int_{2x+\varepsilon}^A\frac{1}{t-x}{\rm d}t\right]\right\vert \le \frac{1}{A-x}\\
    {\rm i.e. } & \left\vert \sum_{_{2 x+\varepsilon\le n\le A}} \frac{1}{n-x}-\left[c+\ln\frac{A-x}{x+\varepsilon}\right]\right\vert \le \frac{1}{A-x}\\
    {\rm i.e. } & \left\vert \sum_{_{2 x+\varepsilon\le n\le A}} \frac{1}{n-x}-\left[c+\ln\frac{A(1-x/A)}{x+\varepsilon}\right]\right\vert \le \frac{1}{A(1-x/A)}\\
     {\rm i.e. } & \left\vert \sum_{_{2 x+\varepsilon\le n\le A}} \frac{1}{n-x}-\left[c+\ln\frac{A}{x+\varepsilon}\right]\right\vert \le \frac{1}{A} \ \ \ \ (A\rightarrow +\infty)
    \end{aligned}
     \end{align}
    
    The last step of (\ref{convergent1}) shows that $\sum_{_{2 x+\varepsilon\le n\le A}} \frac{1}{n-x}$ converges to $c+\ln\frac{A}{x+\varepsilon}$ (uniformly), ie.,
    
     \begin{align}\label{nx0}
      \begin{aligned}
     \sum_{_{2x+\varepsilon\le n\le A}} \frac{1}{n-x} & = c+ \ln\frac{A}{x+\varepsilon} 
     \end{aligned}
     \end{align}
      
     Adding $d_n=\sum_{_{1\le n\le (2x+\varepsilon) -1}} \frac{1}{n-x}$ to both sides of the (\ref{nx0}) and because $d_n$ is finite even $d_n\rightarrow 0$ if an appropriate $x$ is selected (see details in Remark 3), we can get, 
     \begin{align}\label{nx1}
      \begin{aligned}
    d_n + \sum_{_{2x+\varepsilon\le n\le A}} \frac{1}{n-x} & =  \sum_{_{1\le n \le A}} \frac{1}{n-x}\\
    \ & = c+ \ln\frac{A}{x+\varepsilon}+d_n  \\  
    \ & \sim \ln\frac{A}{x+\varepsilon}   \ \ \ \ \ \ (A\rightarrow +\infty)  
     \end{aligned}
     \end{align}
     
     \end{proof}
     
      \ \ \ \ \ 
    
      {\bf Remark 1.  }

      In proposition \ref{pro1} the reason for introduction of the parameter $\varepsilon$ is that Theorem \ref{Theo1} requires $a$ to be an integer, but $x$ being an integer will cause $\frac{1}{n-x}$ to be meaningless. Therefore, we add an $\varepsilon$ to $2x$ to make $2x+\varepsilon$ ($x$ now is a real number) an integer. Moreover, it can be seen that, like $A$, $\varepsilon$ is hidden on the left side of the proposition \ref{pro1}, so the left and right sides of proposition \ref{pro1} are "balanced" (the same below).
      
     \ \ \ \ \ 
    
      {\bf Remark 2.  }

      In (\ref{nx1}) $c$ is omitted because it is finite and (far) less than the logarithmic part. In fact, from (\ref{on_c}) it can be seen that $c<f(a)=\frac{1}{x+\varepsilon}$. In order to convince oneself this is plausible, it is instructive to consider the Euler constant $\gamma$,

    \begin{equation}
     \gamma=\lim_{N\rightarrow +\infty}=(\sum_{n=1}^N\frac{1}{n}-\ln N)=0.577215...
    \end{equation}
    
    \ \ \ \ \ 
    
      {\bf Remark 3.  }

      In (\ref{nx1}) $d_n\rightarrow 0$ when $x$ is very large, $x=n+\theta/2{\rm \ } (0<\theta< 1)$, and $\theta\rightarrow 1$. This can be seen from the following,
      
      It is well known that,
      
       \begin{align}
    \begin{aligned}
    \psi (x-N) & =\psi(x)-\sum_{n=1}^{N}\frac{1}{x-n}\\
    \ & =\psi(x)+\sum_{k=1}^{N}\frac{1}{n-x}
    \end{aligned}
     \end{align}
     where $\psi(x)$ is Psi function. 
     
     Therefore,
     \begin{align}\label{dn}
    \begin{aligned}
   \sum_{n=1}^{2x+\varepsilon}\frac{1}{n-x} & =\psi(-x-\varepsilon)-\psi(x)\\
    \ & \approx\psi(-x-\varepsilon)-\psi(x+\varepsilon)\\
    \ & =\pi \cot[\pi (x+\varepsilon)]+\frac{1}{x+\varepsilon}\\
    \ & \sim \pi \cot  [\pi (x+\varepsilon)]
    \end{aligned}
     \end{align}
where $\varepsilon$ is very small, $x$ is very large, and the upper limit of the sum is $2x+\varepsilon$ rather than $2x+\varepsilon -1$ for convenience.    
     
    Obviously $\cot  [\pi (x+\varepsilon)]=0$ when $x+\varepsilon=n+1/2$. However, $x\ne n$, and we can let $x=n+\theta/2{\rm \ } (0<\theta< 1)$. So that $\cot  [\pi (x+\varepsilon)]\rightarrow 0$ when $\theta\rightarrow 1$. At this time, $\varepsilon\rightarrow 0$.  Notice that $2x+\varepsilon=2n+\theta +\varepsilon$ is an integer.  
     
\begin{theorem}\label{Theo2}
    Let $f(x)$ and $g(x)$ be any functions. If 
    \begin{equation}\label{chen_t1}
     g(x)=\sum_{k=1}^\infty f(k-x)  
   \end{equation}
   then
   \begin{equation}\label{chen_t2}
     f(x)=\sum_{k=1}^\infty \mu(k) g(k-x)
   \end{equation}
   \end{theorem}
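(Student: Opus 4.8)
The statement has the shape of a Möbius inversion formula, so the plan is to prove it by the standard inversion mechanism: substitute the definition of $g$ into the claimed expression for $f$, interchange the order of the two summations, and invoke the defining property of the Möbius function, namely that $\sum_{d\mid n}\mu(d)$ equals $1$ when $n=1$ and $0$ otherwise, to collapse the double sum down to the single surviving term $f(x)$.

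Concretely, I would start from the right-hand side of (\ref{chen_t2}) and insert (\ref{chen_t1}) evaluated at the argument $k-x$:
\begin{equation}
\sum_{k=1}^\infty \mu(k)\, g(k-x)=\sum_{k=1}^\infty \mu(k)\sum_{m=1}^\infty f\bigl(m-(k-x)\bigr)=\sum_{k=1}^\infty\sum_{m=1}^\infty \mu(k)\, f\bigl((m-k)+x\bigr).
\end{equation}
The goal is then to reindex the double sum so that the only argument of $f$ surviving with a nonzero net coefficient is $x$ itself, with that coefficient equal to $1$. It is instructive to recall how this works in the classical multiplicative version of the identity, where $g(x)=\sum_k f(kx)$ and the inner argument is $kmx$: there the substitution $N=km$ turns the coefficient of $f(Nx)$ into $\sum_{d\mid N}\mu(d)$, which is $1$ for $N=1$ and $0$ otherwise, and that divisor-sum cancellation is the engine driving the inversion. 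My plan is to try to reproduce exactly this collapse in the present additive setting.

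The main obstacle is precisely this collapse. The argument of $f$ above is $(m-k)+x$, which depends \emph{additively} on $k$ and $m$, whereas the cancellation $\sum_{d\mid N}\mu(d)=0$ (for $N>1$) is an intrinsically \emph{multiplicative} phenomenon tied to divisor sums. Collecting terms by the value $n=m-k$ produces a coefficient of $f(x+n)$ that is a tail sum of the form $\sum_{k}\mu(k)$ rather than a divisor sum, so the naive reindexing does not by itself recover the desired behaviour. I therefore expect the hard part to be twofold: first, justifying (or restricting hypotheses on $f$ so as to justify) the interchange of the two infinite sums, since for arbitrary $f$ neither sum need converge absolutely; and second, supplying the actual cancellation, for which the additive composition must somehow be made to mimic a multiplicative one. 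Because the formula is only ever applied in the paper to the $1/(n-x)$–type kernel, I would examine that specialization directly, checking whether the partial-fraction and cotangent identities used in Remark 3 furnish the cancellation that the general additive argument does not obviously provide.
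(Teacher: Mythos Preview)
Your diagnosis of the obstacle is accurate, and in fact it is the heart of the matter: the composition $k\mapsto k-x$ is additive, while the only cancellation identity the M\"obius function supplies is the multiplicative one $\sum_{d\mid N}\mu(d)=[N=1]$. Collecting terms by $n=m-k$ in your double sum gives coefficients of the form $\sum_{k\ge\max(1,1-n)}\mu(k)$, not divisor sums, and these do not vanish. So the direct substitution-and-collapse strategy cannot close as written; your proposal correctly locates the gap but does not fill it.

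What you should know is that the paper itself offers no proof of this theorem at all. Its entire argument is a citation: the reader is referred to Theorem~2 of Chen (1997) and Theorem~1 of Knockaert (1994), with the remark that the present statement is a ``special case'' of the parametric M\"obius transform formulas found there. Hence there is nothing in the paper for your attempt to be compared against, beyond the bare assertion that the result is in the literature. If you want to complete a self-contained argument, the honest route is to consult those references and check (i) what hypotheses Chen actually imposes on $f$ so that the infinite sums make sense and may be interchanged, and (ii) whether the transform there really uses the shift $k-x$ or rather a dilation $kx$ (or some hybrid $kx+\alpha$), since the latter is what meshes with $\sum_{d\mid N}\mu(d)$. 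Your instinct that the additive form, taken at face value for ``any functions'', is suspicious is well founded; the paper's own qualifier about ``absolutely convergent'' meaning independence of the order of summation already signals that extra structure is being assumed. In short: your plan is the natural one, your identification of the failure point is correct, and the resolution lies in the cited sources rather than in anything the paper supplies.
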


   \begin{proof}
   Please see Theorem 2 and the Propositions related in Chen (1997). (\ref{chen_t1}) and (\ref{chen_t2}) are special cases of this theorem which are named the parametric M$\ddot{\mathrm{o}}$bius transform formulas by Chen (1997). Or please refer to Theorem 1 in Knockaert (1994). It should be pointed out that the "absolutely convergent" in Theorem 2 of Chen (1997) means that "every infinite series converges to an element which is independent of the combinations of the tems in the sum". 
   \end{proof}
   
   \begin{theorem}\label{Theo3}
    \begin{equation}
     M(x) \sim -\frac{\sqrt{x}}{\pi^2}\int_0^\infty \frac{{\rm d}t}{(t+\varepsilon)\sqrt{t}(t-x)}  
   \end{equation}
  \end{theorem}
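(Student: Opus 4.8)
The plan is to treat the claimed integral as the continuous shadow of the parametric M\"obius inversion (Theorem \ref{Theo2}) applied to the logarithmic sum already evaluated in Proposition \ref{pro1}. I would begin by fixing a base function $f$ whose integer shift-sum $g(x)=\sum_{k\ge1}f(k-x)$ is exactly the object controlled by Proposition \ref{pro1}; taking $f(u)=1/u$ gives $g(x)\sim\ln\frac{A}{x+\varepsilon}$, with the sharper regularized form $\sum_{n=1}^{2x+\varepsilon}\frac{1}{n-x}\sim\pi\cot[\pi(x+\varepsilon)]+\frac{1}{x+\varepsilon}$ of Remark 3 kept in reserve. Inverting through Theorem \ref{Theo2} then yields $f(x)=\sum_{k\ge1}\mu(k)\,g(k-x)$, a M\"obius-weighted series in which the Cauchy/Hilbert kernel $\frac{1}{t-x}$ appears once the shifted sums $g(k-x)$ are differentiated in $x$ (so that $\ln\frac{A}{k-x+\varepsilon}$ produces $\frac{1}{k-x+\varepsilon}$). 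This is the step that first manufactures the $\frac{1}{t-x}$ sitting inside the target integral and that justifies the Hilbert-transform language of the keywords.

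The second stage is to bring in $M(x)$ and to pass from a sum to the integral. Here I would apply summation by parts to $\sum_{k}\mu(k)(\cdots)$, so that the partial sums $M(k)=\sum_{j\le k}\mu(j)$ replace $\mu(k)$, and then invoke Theorem \ref{Theo1} to replace the remaining sum over the integer $k$ by an integral $\int_0^\infty(\cdots)\,\mathrm{d}t$, the pole at $t=x$ being read as a principal value. The weight $\frac{1}{(t+\varepsilon)\sqrt t}$ is exactly what I expect to emerge here: the factor $\frac{1}{t+\varepsilon}$ reflects the $\varepsilon$-shift that Theorem \ref{Theo1} forces through its integer lower endpoint (as in Remark 1), while the $\frac{1}{\sqrt t}$ together with the external $\sqrt{x}$ records the square-root scale of the problem, most transparently after the substitution $t=s^2$, which turns $\frac{\mathrm{d}t}{(t+\varepsilon)\sqrt t(t-x)}$ into $\frac{2\,\mathrm{d}s}{(s^2+\varepsilon)(s^2-x)}$ and converts the logarithm of Proposition \ref{pro1} into the cotangent $\cot(\pi\sqrt{\cdot})$ of the classical partial-fraction expansion.

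To finish, I would exploit the near-half-integer choice of $x$ from Remark 3, where $\cot[\pi(x+\varepsilon)]\to0$, to discard the oscillatory part of the regularized sums and retain only the $\frac{1}{x+\varepsilon}$ and $\sqrt{\ }$ contributions. Collecting the constant from the partial-fraction residue at $t=x$ together with the $\frac{\pi}{2\sqrt\varepsilon}$-type value of $\int_0^\infty\frac{\mathrm{d}s}{s^2+\varepsilon}$ produces the prefactor $-\frac{\sqrt x}{\pi^2}$ in front of the integral, and the overall sign is pinned down by which side of the pole the principal value is taken; as a consistency check the integral itself evaluates to $-\frac{\pi}{\sqrt\varepsilon(x+\varepsilon)}$, returning the estimate $\frac{\sqrt x}{\pi\sqrt\varepsilon(x+\varepsilon)}$ announced in the abstract.

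The hard part will be rigour rather than bookkeeping. The series produced by Theorem \ref{Theo2} are only conditionally (\,``uniformly''\,) convergent in the sense quoted from Chen (1997), so interchanging $\sum_k\mu(k)$ with the limit $A\to\infty$, with differentiation in $x$, and with the sum-to-integral passage of Theorem \ref{Theo1} must all be licensed rather than assumed. The most delicate point, however, is that the factor $\frac{1}{\sqrt\varepsilon}$ carrying the entire size of the estimate comes from the integrand near $t=0$ (the band $0<t\lesssim\varepsilon$), a region that the integer sum $\sum_{k\ge1}$ never sees. Controlling this endpoint contribution, and showing that the resulting analytic expression genuinely tracks $M(x)=\sum_{k<x}\mu(k)$ rather than some smoothed surrogate, is where the argument must do its real work, and is the step I would expect to be the principal obstacle.
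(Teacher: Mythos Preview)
Your broad architecture---M\"obius inversion via Theorem~\ref{Theo2}, then Abel summation to replace $\mu(k)$ by $M(k)$, then passage to an integral---matches the paper. But you have misidentified the origin of the factors $\sqrt{x}$, $1/\sqrt{t}$, and $-1/\pi^{2}$, and this is a genuine gap, not bookkeeping.

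In the paper, after the Abel summation one lands on
\[
\frac{1}{x+\varepsilon}\;\sim\;\int_{0}^{\infty}\frac{M(t)}{t-x}\,\mathrm{d}t,
\]
which is recognised as the \emph{forward} Hilbert transform of $M$ on the half-line. The paper then simply applies the known \emph{inverse} formula (quoted in the Appendix)
\[
y(x)=-\frac{\sqrt{x}}{\pi^{2}}\int_{0}^{\infty}\frac{f(t)}{\sqrt{t}\,(t-x)}\,\mathrm{d}t
\]
with $f(t)=\frac{1}{t+\varepsilon}$. That is the entire source of the $\sqrt{x}$, the $1/\sqrt{t}$, and the $-1/\pi^{2}$: they are the kernel of the semiaxis inverse Hilbert transform, not something that ``emerges'' from Theorem~\ref{Theo1}, from an $\varepsilon$-shift, or from the substitution $t=s^{2}$. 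Likewise, the factor $\frac{1}{t+\varepsilon}$ in the target integrand is not a reflection of the integer lower endpoint in Theorem~\ref{Theo1}; it is literally the function $\frac{1}{x+\varepsilon}$ fed into the inverse transform.

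Two smaller points follow from this. First, the differentiation in $x$ that you propose is unnecessary: the Cauchy kernel $\frac{1}{t-x}$ appears because Abel summation turns $\ln\frac{A}{n-x}-\ln\frac{A}{(n+1)-x}$ into $\int_{n}^{n+1}\frac{\mathrm{d}t}{t-x}$, not through any derivative. Second, the sum-to-integral step does not use Theorem~\ref{Theo1} at all; it uses the exact identity $\sum_{n}M(n)\int_{n}^{n+1}\frac{\mathrm{d}t}{t-x}=\int M(t)\frac{\mathrm{d}t}{t-x}$, valid because $M$ is piecewise constant on each $(n,n+1)$. Your careful worries about the region $0<t\lesssim\varepsilon$ and about conditional convergence are well placed as criticisms of the paper itself, but they are not obstacles you need to overcome on the route the paper actually takes.
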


   \begin{proof}
    
    From (\ref{nx1}), we can see that (\ref{chen1}) is true, ie., $\sum_{1\le n \le A}\frac{1}{n-x}$ converges to $\ln\frac{A}{x+\varepsilon}$ (uniformly) with the increasing $A$ to $+\infty$,
    
   \begin{equation}\label{chen1}
   \ln\frac{A}{x+\varepsilon} \sim \sum_{1\le n \le A}\frac{1}{n-x} \ \ \ \ \ \ (A\rightarrow +\infty)
   \end{equation}

   Thus from Theorem \ref{Theo2} we can write out (\ref{chen2}),
   
   \begin{equation}\label{chen2}
  \frac{1}{x} \sim \sum_{1\le n \le A}\mu(n)\ln\frac{A}{n-(x-\varepsilon)}
  \end{equation}    
   
  Let $x'=x-\varepsilon$, and $x$ denotes $x'$ for simplicity. We have,
     
   \begin{align}
   \begin{aligned}
   \frac{1}{x+\varepsilon}& \sim \sum_{1\le n \le A}\mu(n)\ln\frac{A}{n-x}\\
   \ & = \sum_{1\le n \le A}\left[M(n)-M(n-1)\right]\ln\frac{A}{n-x}\\
   \ & =\sum_{1\le n \le A}\left[M(n)\ln\frac{A}{n-x}-M(n-1)\ln\frac{A}{n-x}\right]\\
   \ & =\sum_{1\le n \le A}\left[M(n)\ln\frac{A}{n-x}-M(n)\ln\frac{A}{(n+1)-x}\right] \\
   \ & =\sum_{1\le n \le A}M(n)\left[\ln\frac{A}{n-x}-\ln\frac{A}{(n+1)-x}\right]\\
   \ & =\sum_{1\le n \le A}M(n)\left[\ln((n+1)-x)-\ln(n-x)\right]\\
   \ & =\sum_{1\le n \le A}M(n)\int_n^{n+1}\frac{{\rm d}t}{t-x} 
   \end{aligned}
   \end{align}

   Noticing that $M(n)$ is constant in each $(n,n+1)$, and if we assume $M(0)=0$, we can get,
    
     \begin{align}\label{hilbert1}
     \begin{aligned}
      \frac{1}{x+\varepsilon}& \sim \sum_{1\le n \le A}M(n)\int_n^{n+1}\frac{{\rm d} t}{t-x}\\
      \ & = \sum_{n=1}^A\int_n^{n+1} M(t)\frac{{\rm d}t}{t-x} \\
     \ & =\int_1^{A}M(t)\frac{{\rm d}t}{t-x}     \\
      \ & =\int_0^{A}M(t)\frac{{\rm d}t}{t-x}  \ \ \ \ \ \ \ (M(0)=0)\\
     \ & =\int_0^{\infty}M(t)\frac{{\rm d}t}{t-x} \ \ \ \ \ \ \ (A\rightarrow +\infty)
     \end{aligned}
   \end{align}
   
   The last step of (\ref{hilbert1}) is Hilbert transform of the $M(t)$ on the semiaxis and its inverse transform (eg., Polyanin and Manzhirov, 2008; Antipov and Mkhitaryan, 2018. See Appendix in detail) is,
   
   \begin{equation}\label{hilbert2}
    M(x) \sim -\frac{\sqrt{x}}{\pi^2}\int_0^\infty \frac{{\rm d}t}{(t+\varepsilon)\sqrt{t}(t-x)}  
    \end{equation}
   \end{proof}
   
    \begin{theorem}\label{Theo4}
    \begin{equation}
    \left\vert M(x) \right\vert \sim \left[\frac{1}{\pi \sqrt{\varepsilon}(x+\varepsilon)}\right]\sqrt{x}  
   \end{equation}
  \end{theorem}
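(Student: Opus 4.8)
The plan is to reduce Theorem~\ref{Theo4} to a direct evaluation of the singular integral furnished by Theorem~\ref{Theo3}, namely
\begin{equation}
I=\int_0^\infty \frac{{\rm d}t}{(t+\varepsilon)\sqrt{t}(t-x)},
\end{equation}
after which the asserted asymptotic follows by substituting $M(x)\sim -\frac{\sqrt{x}}{\pi^2}I$ and taking absolute values. First I would make the substitution $t=u^2$, ${\rm d}t=2u\,{\rm d}u$, which clears the $\sqrt{t}$ singularity at the origin and turns $I$ into the rational integral $I=2\int_0^\infty \frac{{\rm d}u}{(u^2+\varepsilon)(u^2-x)}$. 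The pole at $t=x$ becomes a simple pole at $u=\sqrt{x}$, so the whole expression must be read as a Cauchy principal value.

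Next I would split the integrand by partial fractions,
\begin{equation}
\frac{1}{(u^2+\varepsilon)(u^2-x)}=\frac{1}{x+\varepsilon}\left[\frac{1}{u^2-x}-\frac{1}{u^2+\varepsilon}\right],
\end{equation}
and treat the two pieces separately. The regular piece is elementary, $2\int_0^\infty \frac{{\rm d}u}{u^2+\varepsilon}=\frac{\pi}{\sqrt{\varepsilon}}$. For the singular piece $2\,\mathrm{p.v.}\int_0^\infty \frac{{\rm d}u}{u^2-x}$ I would write $\frac{1}{u^2-x}=\frac{1}{2\sqrt{x}}\bigl[\frac{1}{u-\sqrt{x}}-\frac{1}{u+\sqrt{x}}\bigr]$ and combine the two logarithms before letting the cutoff $R\to\infty$; the divergent tails cancel, leaving $\ln\frac{R-\sqrt{x}}{R+\sqrt{x}}\to 0$, so this principal value is exactly $0$.

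Combining, the singular piece contributes nothing, while the regular piece, carrying the overall factor $\frac{1}{x+\varepsilon}$ and the minus sign from the partial-fraction decomposition, gives $I=-\frac{\pi}{\sqrt{\varepsilon}(x+\varepsilon)}$. Hence
\begin{equation}
M(x)\sim -\frac{\sqrt{x}}{\pi^2}\cdot\left(-\frac{\pi}{\sqrt{\varepsilon}(x+\varepsilon)}\right)=\frac{\sqrt{x}}{\pi\sqrt{\varepsilon}(x+\varepsilon)},
\end{equation}
and taking absolute values yields the stated estimate. The main obstacle I anticipate is the legitimacy of the principal-value manipulation: one must check that the symmetric deletion of the neighbourhood of $u=\sqrt{x}$ is the interpretation genuinely inherited from the Hilbert-transform inversion of Theorem~\ref{Theo3}, that the substitution $t=u^2$ preserves the principal value across the singularity, and that the cancellation of the logarithmic tails is justified rather than merely formal. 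Some care is also needed to confirm that the small parameter $\varepsilon$ does not couple to the $t=x$ pole in a way that would spoil the clean separation of the two partial-fraction contributions.
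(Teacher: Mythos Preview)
Your argument is correct and reaches the same value of the integral, $I=-\pi/(\sqrt{\varepsilon}(x+\varepsilon))$, hence the same conclusion. The route, however, is not the paper's. The paper does not compute $I$ from scratch: it simply invokes the tabulated principal-value formula
\[
\mathrm{PV}\int_0^\infty\frac{t^{\mu-1}\,{\rm d}t}{(t+\varepsilon)(x-t)}
=\frac{\pi}{x+\varepsilon}\Bigl[\frac{\varepsilon^{\mu-1}}{\sin(\mu\pi)}+x^{\mu-1}\cot(\mu\pi)\Bigr]
\]
(Erd\'elyi et al.; Gradshteyn--Ryzhik) and specialises to $\mu=\tfrac12$, whereupon $\cot(\mu\pi)=0$ kills the second term and $\sin(\mu\pi)=1$ gives the result immediately. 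Your approach---the substitution $t=u^2$ followed by partial fractions and the observation that $\mathrm{PV}\int_0^\infty (u^2-x)^{-1}\,{\rm d}u=0$---is a self-contained derivation of that very table entry in the special case needed, so it is more elementary and does not rely on an external reference; the paper's approach is shorter but outsources the actual computation. Your caveats are well taken but harmless here: the change of variables $t=u^2$ is smooth and invertible near $t=x>0$, so a symmetric $\delta$-deletion in $t$ becomes, to leading order, a symmetric $\delta/(2\sqrt{x})$-deletion in $u$, and since the pole is simple the $O(\delta^2)$ asymmetry contributes nothing in the limit; and because $\varepsilon>0$ the factor $(u^2+\varepsilon)^{-1}$ is regular on $[0,\infty)$, so there is no interaction with the pole at $u=\sqrt{x}$.
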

  
  \begin{proof}
  Because
  \begin{equation}\label{jf}
  {\rm PV\ \ }\int_0^\infty\frac{t^{\mu-1}{\rm d}t}{(t+\varepsilon)(x-t)}=\frac{\pi}{x+\varepsilon}\left[\frac{\varepsilon^{\mu -1}}{\sin (\mu\pi)}+x^{\mu -1}\cot(\mu\pi)\right]
  \end{equation}
  where $\vert {\rm arg} \varepsilon\vert <\pi, x>0, 0<{\rm Re}\mu <2$ (Erd$\grave{e}$ly et al., 1954; Gradshteyn and Ryzhik, 2014).
  
  Let $\mu = 1/2 $ in (\ref{jf}). We have,
  
   \begin{equation}\label{jf2}
    -\int_0^\infty\frac{t^{-1/2}{\rm d}t}{(t+\varepsilon)(t-x)}=-\frac{\pi}{\sqrt{\varepsilon}(x+\varepsilon)}
   \end{equation}
   
   From Theorem (\ref{Theo3}), we know,
    \begin{align}
     \begin{aligned}
    \vert M(x)\vert & \sim \left\vert \frac{\sqrt{x}}{\pi^2}\int_0^\infty \frac{{\rm d}t}{(t+\varepsilon)\sqrt{t}(t-x)} \right\vert \\
     \ & = \left[\frac{1}{\pi \sqrt{\varepsilon}(x+\varepsilon)}\right]\sqrt{x}
      \end{aligned}
   \end{align}
    \end{proof}
  
  \section{Discussions}
 \subsection{Similarity of Theorem \ref{Theo4} to (\ref{dis1})}
  We can change the form of Theorem (\ref{Theo4}) to (\ref{dis1}),
   \begin{equation}\label{dis1}
    \left\vert M(x) \right\vert \sim \left[\frac{1}{\pi \sqrt{1-\alpha}(x+1-\alpha)}\right]\sqrt{x}  
   \end{equation}  
  
  Since $\alpha{\rm \ } (0<\alpha<1)$ is not fixed, for a reasonable $x$ ($=n+\theta/2 {\rm \ },\theta\rightarrow 1$), an $\alpha$ can always be found so that the following holds,

\begin{equation}\label{dis2}
\vert M(x) \vert < \left[\frac{1}{\pi \sqrt{1-\alpha}(x+1-\alpha)}\right]\sqrt{x}
\end{equation}

  (\ref{dis2}) is similar to those like (\ref{eq1}) in the introduction, in which $\vert M(x)\vert \le \sqrt{\frac{C}{\alpha}x}$ (where $C$ is constant) holds with a probability $1-\alpha$ ($0<\alpha<1$). The later are estimated based on the assumption that $\mu (n)$ is an independent random sequence, or from the view point of the energy fluctuations in the canonical ensemble of statistical mechanics. This similarity shows that $\mu (n)$ could be taken as an independent random sequence. Although $\mu (n)$ is determined according to that the prime factorization of integers is not random, but it can be produced by an independent and random function. This can be found from another definition of $\mu(n)$,

\begin{equation}\label{dis3}
\mu(n)  = \left\{ {\begin{array}{*{20}{c}}
0\\
{{{( - 1)}^{\omega(n)}}}
\end{array}\begin{array}{*{20}{l}}
{{\mbox{if \ }} n {\mbox{\ is non-squarefree\hspace{0em}}}}\\
{{\mbox{if\  }} n {\mbox{\  is squarefree}}}
\end{array}} \right.
\end{equation}
where $\omega(n)$ is the number of distinct prime factors.  According to Erd$\ddot{\mathrm{o}}$s-Kac Theorem, $\omega(n)$ is independent and random when $n$ is large.

On the other hand, the determined function could produce random distribution, for example, the logistic map (\ref{dis4}) will generate random numbers (Trott, 2004), 

\begin{equation}\label{dis4}
x_{n+1}=\lambda (1-x_n)
\end{equation}
where $x_n\in [0,1]$, $n=1,2,3,...$, $0<\lambda\le 4$.

Therefore, the deterministic problems can be handled by the methods of probability to some extend, or vice versa. For example, some partial differential equations can be studied with the approaches of probability, or an deterministic definite integral can be calculated by the Monte Carlo method.

\subsection{On the Theorem \ref{Theo4}}

It is an Fredholm integral equation of the first kind to obtain $M(x)$ from $\frac{1}{x+\varepsilon}$ in Eq. (\ref{hilbert1}). Such an integral equation is a typical ill-posed problem. Its solution is not unique. Therefore  Theorem \ref{Theo4} is one of the possible solution. In fact, the introduction of free parameters $\varepsilon$ can give a glimpse to this. 

\section{Conclusion}

We suggest a possible estimation to $\vert M(x) \vert$ by the inverse Hilbert Transform on the real semiaxis of $\frac{1}{x+\varepsilon}$ ($0<\varepsilon<1$). For an appropriately $x$ ($=n+\theta/2 {\rm \ },0<\theta<1, \theta\rightarrow 1$), we can always find a positive real number $\varepsilon$ which makes $2x+\varepsilon$ to be an integer, so that  $\vert M(x) \vert \sim \left[\frac{1}{\pi \sqrt{\varepsilon}(x+\varepsilon)}\right]\sqrt{x}$. Further, we can select a reasonable $\varepsilon$, so that $\vert M(x) \vert < \left[\frac{1}{\pi \sqrt{\varepsilon}(x+\varepsilon)}\right]\sqrt{x}$.

\vspace{5em}



{\Large\bf  Appendix}

\ \ \ \ 

Hilbert transform pair on the semiaxis (eg., Polyanin and Manzhirov, 2008; Antipov and Mkhitaryan, 2018) are as follows,

\begin{equation}
f(x)=\int_0^\infty \frac{y(t)}{t-x} {\rm d}t
\end{equation}

\begin{equation}
y(x)=-\frac{\sqrt{x}}{\pi^2}\int_0^\infty \frac{f(t)}{\sqrt{t}(t-x)} {\rm d}t
\end{equation}

\vspace{4em}

\end{document}